\def\noi{\noindent}
\def\skip{\vspace{2mm}}
\def\Remark{\skip\noi{{\bf{Remark \number\u.}}} \advance\u by 1}
\def\noi{\noindent}
\newtheorem{props}{Proposition}
\newcounter{defnctr}
\newtheorem{defns}[defnctr]{Definition}
\newtheorem{corrs}[props]{Corollary}
\newtheorem{lemms}[props]{Lemma}
\newcounter{exctr}
\newtheorem{es}[exctr]{Example}
\newenvironment{ex}{\begin{es} \rm}{\hfill $\Box$\end{es}}
\newtheorem{ess}[exctr]{Examples}
\newenvironment{proof}{\noindent{\underline{Proof} ${}$ \rm \\}}
                      {\hfill $\Box$}
\begin{document}

\title{Paradoxical decompositions and finitary colouring rules}
\author {Robert Samuel Simon and Grzegorz Tomkowicz}

\maketitle
\thispagestyle{empty}
\vfill

\noi  Robert Simon 
\newline London School of Economics
\newline 
Department of Mathematics\newline 
Houghton Street\newline 
 London WC2A 2AE\newline                    
email: R.S.Simon@lse.ac.uk
\vskip.5cm

 \noi Grzegorz Tomkowicz\\
  Centrum Edukacji $G^2$\\
        ul.Moniuszki 9\\
        41-902 Bytom\\
        Poland\\
      e-mail: gtomko@vp.pl

\vfill

\date{}

\setcounter{page}{-1}
  \noi

\newpage
\vskip2cm
\thispagestyle{empty}
\begin{center}
{\bf Abstract}
\end{center}
\vskip1cm

\noi  We  colour every point $x$  of a probability space $X$
according   to  the
colours of a finite list  $x_1, x_2, \dots , x_k$ of points
 such that each of the  $x_i$,  as a function of $x$, is 
 a measure preserving transformation.  We ask two questions
 about  a colouring  rule: 
 (1) does there exist a finitely additive extension
 of the probability measure for which the $x_i$
 remain measure preserving and also a  colouring obeying the rule almost
  everywhere that is 
  measurable with respect to this extension?,
 and (2) does there exist some  colouring obeying the rule almost everywhere?
 If the answer to the first question is no and to the second question yes, 
 we say that the colouring  rule is paradoxical. A paradoxical colouring 
 rule not only allows for a paradoxical partition of the space,
 it  requires one.  If a colouring rule is paradoxical, an axiom
 of choice is  used to prove (2) but not typically used to prove (1).  
 We show that a  form of paradoxical decomposition can be created
 from the colour classes of any such colouring.
 We show that proper vertex colouring can be paradoxical.  We conclude with several
  new topics and open questions.

\vskip2cm

\noi 2010 \emph{Mathematics Subject Classification}:
Primary: 03E05, 28C10, 37A15; Secondary: 03E25, 20E05, 43A07.\\
\noi {\bf Key words}: Axiom of Choice, Cayley graphs and graphings, 
Hausdorff Paradox,  measure theoretic paradoxes, Borel colouring

 \newpage

\section{Introduction} 

F. Hausdorff showed in [7]
 that there is no finitely additive rotation-invariant measure $\mu$ defined on all subsets of the unit sphere $\mathbb{S}^2$
 and such that $\mu(\mathbb{S}^2) = 1$. He started with some  subsets and showed that they obeyed certain rules that prevented them from being
  measurable with respect to any finitely additive rotation invariant measure. 

 In the present paper
 we reverse the above  perspective. We start with rules concerning how to colour points in a probability space according to measure preserving transformations.  
   We call such rules \emph{colouring rules}  and we are interested  in colouring
  rules
 where  the \emph{only}  sets that satisfy these rules
 cannot be measurable with respect to \emph{any} finitely additive
 $G$-invariant measure $\mu$ extending the initial
  probability measure.
  Furthermore we are not really
   interested in  such   rules that   are merely inconsistent,
  but rather in  those for
  which, using AC, the rule    can
  be satisfied in \emph {some} way. We call colouring
  rules  {\em paradoxical} when they can be satisfied
   almost everywhere but never in a finitely additive
   measurable  way as described above. The name for our rules is motivated by our Theorem 1, which is closely related to a theorem of Tarski (see [14], Thm. 11.1 and Cor. 11.2 for a proof), that shows
   that the  existence of a paradoxical colouring  rule defined
   on a standard probability space using a topologically transitive
    group action implies
   a paradoxical decomposition of the whole space or some of its subsets.
   It is worthwhile to observe that paradoxical colouring rules concern non-amenable group or semigroup actions
  and non-amenable groups may act in an amenable way (see [5] or [15]).
  Therefore it is natural to link the existence of paradoxical colouring
   rules with the following famous problem of Greenleaf: \emph{What are
     sufficient and necessary conditions for the lack of a finitely additive $G$-invariant probability measure on $X$, where $G$ is a non-amenable group acting transitively on the space $X$?} We will discuss this further
   in the third section.

   \indent Let us analyse closer the Hausdorff's construction in
           [7] that the unit sphere $\mathbb{S}^2$ can be decomposed,
           modulo a countable set $D$ (the
            set of all fixed points of non-identity rotations
            from the  appropriate subgroup
            of rotations)
            into the sets $A, B$ and $C$ satisfying the relation\\
 
 $(*)$ $A \cong B \cong C \cong B \cup C$,\\ 

  \noindent where $\cong$ denotes the congruence of sets. Here the congruences are witnessed by a subgroup of   $SO_3(\mathbb{R})$ isomorphic to $\mathbb{Z}_2 * \mathbb{Z}_3$ (and for
   convenience we will call  this subgroup  $\mathbb{Z}_2 * \mathbb{Z}_3$).  With $\sigma$ generating $\mathbb{Z}_2$
   and $\tau$ generating $\mathbb{Z}_3$, we have the requirements
    $\sigma (A) = B\cup C$,
    $\sigma (B\cup C)=A$, $\tau (A) = B$, $\tau (B)=C$, and $\tau (C) =A$.
    We call these requirements 
   the {\em Hausdorff colouring rule}.
   Measurability with respect to any
   measure where  both $\sigma$ and $\tau$ are still 
    measure preserving  would  require
    that both   half and one-third  of the space
    be taken by the set $A$.
    And  from these three sets, we can also   create six
   sets that partition the whole
    space, namely
    $A\cap \sigma (B)$, $A\cap \sigma (C)$, $\tau (A\cap \sigma (B))$,  $\tau (A\cap \sigma (C))$,  $\tau^2 (A\cap \sigma (B))$ and
    $\tau^2 (A\cap \sigma (C))$, such that by  choosing the  appropriate
     rotations  these six sets recreate 
      two copies of the whole sphere,
      modulo  the countable set $D$.   
  
     Now we  perceive the above congruences,   using
     the actions $\sigma$ and $\tau$ and the colours
      $A$, $B$ and $C$, as the composite of two
      colouring rules.  
     We colour any point $x$ according to the colours
      of $\tau^{-1} (x)$ and $\sigma (x)$. Suppose that 
     $\tau^{-1} (x)$ is coloured  already  by $B$
     and $\sigma (x)$ is coloured already by  $C$; 
      the $\sigma$  part of the rule say that
     $x$ should be coloured
     $A$ and the $\tau$  part of the rule say that $x$ should be coloured
      $C$.  How could this conflict be reconciled?
       One way 
       would be to move further afield to determine the colour of $x$.
        For example, one could specify  a new colour $E$ that can exist at
     only one point in every orbit of $\mathbb{Z}_2 * \mathbb{Z}_3$, and
     then colour the rest of the orbit by  $A$, $B$, and $C$
     according to the group element needed to move to the point in question
     from the representative coloured  $E$ (as was done in
     the original AC construction of the Hausdorff paradox). 
         However for our purposes
     such a set of   rules based on the colour $E$ 
    is  not satisfactory, 
    because  the determination of  membership in $A,B,C,$ or $E$  would involve 
     an unbounded number of group elements.

     We are interested only in  colouring rules that  are {\em finitary}
      in character, meaning that 
      the  colour of a point
      is determined by    finitely many group or
      semigroup elements.  
      Of special interest are rules that always allow for
      some  assigned colour
      no matter how descendants  are coloured
       (unlike the above  Hausdorff rules); we say that such  
        colouring  rules have   {\em rank one}.
           Rank one rules  relate  measure theoretic
           paradoxes to problems of optimisation. A determination
    of one best colour (or a subset of equivalently optimal colours)
    according to finite many  parameters is
    within the grasp of an automaton.

    Furthermore we  allow for colouring  rules that  depend on the 
     position in the space $X$. For example, let  $X$ be $ \{ 0,1\} ^G$
    where the group or semigroup  $G$ acts on $X$. 
    There could be two distinct set of rules,
    depending on whether the coordinate  $x^e$ is equal to $0$ or $1$. 
      We  identify a  special class  of colouring 
      rules: those  that do not depend on  position,
      called \emph{stationary rules}, with
      the others called  \emph{non-stationary rules}. The distinction of
      stationary vs non-stationary
      plays an important role in this paper.
 As we prefer
      colouring rules that can be preformed by an automaton,
      it should be easy to determine
       which part of the rule   should apply.  
       We require that the change in rules should
      be measurable with respect to the original probability distribution (and with all our examples it is determined by membership in clopen sets).

 There is a further distinction of whether the colours are discrete or continuous. Paradoxical colouring rules with finitely  colours are easy to find;  the challenges
      lie with the special properties these rules must obey. 
      The  existence of paradoxical colouring rules when the colours belong to a continuum and continuity of the colouring rule
       is required (with respect to location and neighboring colours) is done
      in [11] and [12].

     The primary concern  with Borel colouring
     (the questions raised in [8]); is what is 
     the {\em Borel} chromatic number,
     the least number of colours needed for a proper vertex colouring
     such that each colour class  defines a 
     Borel measurable set. 
      There are two differences  between our approach and Borel colouring. Proper colouring is only one kind of colouring rule; colouring rules may or may not imply that the colouring is proper. And 
        measurable with our approach means with respect to some  finitely additive measure.

         With a  one dimensional compact continuum of colours (a probability
       simplex determined by two extremal  colours) 
       and a Cantor set,  Simon
       and Tomkowicz [10]  demonstrated 
       a  colouring rule  using a semigroup action for which
       there is no $\epsilon$-Borel colouring for sufficiently
        small $\epsilon>0$ 
       (meaning a
        colouring function  that is Borel measurable
        and obeys the rule in all but a set of measure
        $\epsilon$), though there is a colouring that
     uses almost everywhere the two extremal  colours  of the continuum.  
     \vskip.2cm

         Ramsey Theory is a weakly related topic. A  non-amenable group $G$
          could
          act invariantly  on
          two probability spaces $X$ and $Y$ such that  there is a measurable
          surjection from $X$ to $Y$ that commutes with the actions of
           the group.  
          Assume there is  a paradoxical colouring rule for the space
          $X$ using the group $G$ however   $Y$ possesses   a finitely
          additive $G$-invariant extension measure   defined on all subsets.  
           The consequence would be that this  rule
           cannot be realised in any way on the space $Y$, though it
           would not be  contradictory in a logical  way  independent of
            the space.
            The failure of the colouring rule on part of such a  space $Y$  is
            analogous to the existence of a special  structure 
            with  Ramsey Theory.  \vskip.2cm

        The rest of this paper is organised as follows. The second
        section introduces the basic definitions. The third section 
        shows that a kind of  a paradoxical decomposition
         follows from  a  paradoxical rule  and links 
        this relationship  to the Greenleaf  problem.
        The fourth section demonstrates
        some simple  examples of paradoxical colouring
        rules. 
        The fifth  section shows that  proper vertex  colouring   can be paradoxical. 
        The  concluding
        section explores open questions and directions for further study.

\section{The Basics} 
Let $X$ be a probability space with
$\cal F$ the sigma algebra on which a
probability measure $m$ is defined. Usually $X$ will have a topology and
${\cal F}$ will be 
the induced Borel sets.

We say that the
relation $R \subseteq X \times X$ is \emph{admissible} if:\\
 
\indent $(i)$ for any $x \in X$ there are
finitely many elements $x_1,...,x_k$ of $X$
such that $(x,x_i) \in R$ for $i=1,...,k$.\\
\indent $(ii)$ for almost all  $x \in X$, $R$ is irreflexive,
meaning that $(x,x)$ is not in $R$. \\
 We call the elements $x_1, \dots , x_k$ appearing in $(i)$ the
  {\em descendents} of $x$. \\


  \indent Let $R\subseteq X\times X$ be an admissible
  relation and let  $A$ be a  set of
  colours. Most of our colouring rules have a finite $A$. If $A$ is not
  finite we require a measurable structure to $A$, namely
   a sigma algebra of subsets. 
  We say that $F: X\times A^k \rightarrow
 2^ A   $
 is a \emph{colouring rule}   on $X$
 if \vskip.2cm  (1) the graph  of $F$
  is a 
  measurable subset of $A\times (X\times A^k)$ (with respect to the  $\sigma$-algebra generated  by products
  of measurable sets using the measurability structure on $A$ and the completion of
   the probability measure on $X$),  and \vskip.2cm
  (2) all descendants $x_1, \dots, x_k$
   are measure
   preserving transformations   as functions of $x$,
   with respect to  $m$. \\
   
   It is a colouring rule of {\em rank one} if
   the correspondence $F$ is non-empty, meaning
    that for every $x$ and for every  choice
    of an element $b$ in $A^k$ the  $F (x,b)$ is non-empty.\\ 
\indent We will say that a colouring rule is \emph{deterministic} if for any $x \in X$ and $b\in A^k$ the set  $F(x,b)$ is a single point.  
 The colouring rule is {\em stationary} if
   $F(x,b)$ is determined only by $b$. The colouring rule
   is {\em continuous} if the probability space $X$ and $A$ have
   a topology, ${\cal F}$ are the Borel sets of $X$, and
  the  $F (x,b)$
 is a continuous function of  $x$ and $b$. 
  If there are finitely many colours, a  colouring rule of rank $2$ is one that is not of rank one
   and  is equivalent to
   two  colouring  rules of rank one using the same
    descendents. We 
    mean that for all $x$ and $b$ the set  $F(x,b)$  is equal to
    the intersection of the $F_1 (x,b) \cap F_2 (x,b)$ for the two 
    correspondences $F_1, F_2$ of rank one rules. With at least two
    different colours $c_1, c_2$ and no additional requirements on the correspondences
     we can always define
    $F_i (x,b)= F(x,b)$ if $F(x,b)\not= \emptyset $ and
    $F_1 (x,b) = \{ c_1\}$, $F_2 (x,b) = \{ c_2\}$ if $F(x,b)= \emptyset$.
    For the present purposes of finitely many  colours 
     we don't need ranks higher than two,
    however they may be some  contexts  where  it   makes  sense to have   
     rules of  rank higher than two.  It is possible
     that  a rank two colouring rule 
     may be implied by a rank
    one colouring rule when applied to some space, but we
    exclude this consideration from the definition of rank, which has to do with
     whether the correspondence as defined  is somewhere empty. \\
    \indent A colouring $c: X \rightarrow  A$
    {\em satisfies}
    a colouring rule $F$   if almost
     everywhere (with respect to $m$) it follows
     that $c(x)\subseteq F (x, c(x_1), \dots c(x_n))$.
      We say that a colouring rule  on $X$
     is \emph{non-contradictory} if
     it is satisfied by some colouring of the space. 
  It is \emph{paradoxical} if it is non-contradictory and for every 
   finitely additive extension $\mu$ of $m$ for which the descendants are still
   measure preserving there is no colouring $c$ of the space
   satisfying the colouring rule
   almost everywhere such that 
   $c$ is a $\mu$ measurable function (meaning that for all such $\mu$ there is a 
    measurable subset $D$ of $A$
   such that  $c^{-1}(D)$ is  not  measurable with
    respect 
    to $\mu$).

   \section{Paradoxical rules and paradoxical decompositions} We begin this section with a definition of a semigroup, introduced by A. Tarski [13] (see also [14], Chap. 10), that allows one to describe the paradoxical decompositions in an arithmetic way.\\
 \indent Let $G$ be a group acting on a set $X$. Recall that $A \subseteq X$ is $G$-equidecomposable to a subset $B$ of $X$ if there exist a partition of $A$ into the sets $A_1,...,A_k$ and elements 
 $g_1,...,g_k$ of $G$  such that $g_1(A_1),...,g_k(A_k)$ is a partition of $B$.\\ 
 \indent Define the set $X^{*} = X \times \mathbb{N}$, where $\mathbb{N}$ is the set of natural numbers and define
  the group $G^{*} = \{(g, \pi): g \in G$ \ \textrm{and} $\pi \ \textrm{is a permutation of} \ \mathbb{N} \}$. 
 Further, we define the action $G^{*}$ on $X^{*}$ as follows: $g^{*}(x,n) = (g(x), \pi(n))$.\\
 \indent Let $A$ be a subset of  $ X^{*}$.
 We will call those $n \in \mathbb{N}$ such
  that $A$ has at least one element with second coordinate $n$ the \emph{levels} of $A$.
  And we will say that $A \subseteq X^{*}$ is \emph{bounded} if it has finitely many levels.\\
 \indent Now we are ready to define the \emph{semigroup of types} $\mathcal{S}$ as follows: the set $\mathcal{S}$ is the set of equivalence classes defined by the $G^{*}$-equidecomposability
 of bounded subsets of $X^{*}$. Let $A \subseteq X^{*}$ be a bounded set, we will denote the element of $\mathcal{S}$ corresponding to $A$ by $[A]$ and call it the \emph{type of $A$}.\\
 \indent We define the addition $+$ in $\mathcal{S}$ as $[A] + [B] = [A \cup B']$, where $B'$ is a bounded set such that the levels of $B'$ and the levels of $A$ are disjoint. It is easy to check that $+$ is well-defined
 and  the element $[\emptyset]$ is the identity. Moreover, we can define an order in $\mathcal{S}$ given by $[A] \leq [B]$ if and only if there exists a $[C]$ such that $[A] + [C] = [B]$. Thus $[A] \leq [B]$ if and only
 if $A$ is $G^{*}$-equidecomposable with a subset of $B$.\\
\indent Now we can express the fact that $E \subseteq X$ is $G$-paradoxical as $[E] = 2 [E]$ in the semigroup of types. See [14] for more informations about the semigroup.\\
 \indent The following theorem proved by D. K\"onig (see [14], Thm. 10.20) is essential for our considerations:\\
 
\noi  \textbf{Theorem A.} (Cancellation Law) If $\alpha, \beta \in \mathcal{S}$ and $n$ is a positive integer then $n \alpha \leq n \beta$ implies $\alpha \leq \beta$.\\

  The proof of the Cancellation Law uses a form of the uncountable Axiom of Choice and it may happen that when applied to the algebra of Borel sets in some topological space it produces a non-Borel set.
  Therefore the problem of whether the
  Cancellation Law is true for an
  algebra of Borel sets (even in $\mathbb{R}$) remains open.

  Let $\mathcal{A}$ be an algebra of subsets of $X$. We can define an algebra $\mathcal{A}^{*}$ of bounded subsets in $X^{*}$ as the algebra with the property that each level of $A \in \mathcal{A}^{*}$ is a subset of 
 $\mathcal{A}$. Now we can define the semigroup $\mathcal{S}(\mathcal{A})$ as the set of equivalence classes restricted to the sets in $\mathcal{A}^{*}$.\\
 
 \indent We will need  the following fundamental theorem of Tarski (see [14], Thm. 11.1):\\ 

 \noi \textbf{Theorem B.} Let $\mathcal{T}$ be one of $\mathcal{S}$ or
 $\mathcal{S}(\mathcal{A})$ and let $\epsilon$ be a specified element. Then the following are equivalent:\\
  $(i)$ For all $n \in \mathbb{N}$ it is not the case that $(n+1)\epsilon \leq n \epsilon$ in $\mathcal{T}$;\\
  $(ii)$ There exists a semigroup homomorphism $\mu : \mathcal{T} \rightarrow [0, \infty]$ such that $\mu (\epsilon) = 1$.\\

 The proof of Theorem B involves the following Extension Theorem proved by Tarski [13] (Satz 1.55.):\\

 \noi \textbf {Theorem C (Extension Theorem).} Let $\mathcal{U}$ be a subsemigroup of the semigroup of types $\mathcal{T}$ and let $\epsilon \in \mathcal{U}$ be an element that satisfies $(i)$
 from Theorem B. If $\mu$ is a measure on $\mathcal{U}$ with $\mu (\epsilon) = 1$ (a semigroup homomorphism $\mu : \mathcal{U} \rightarrow [0, \infty]$ such that $\mu (\epsilon) = 1$),
 then there is an extension of $\mu$ to $\mathcal{T}$.\\

 In what follows we will use the fact that in the case of $\sigma$-algebras, the inequality of Theorem B can be substituted by equality. This follows from the Banach-Schr\"oder-Bernstein theorem (see [14], Thm. 3.6).\\
 \indent Let $(X, \mu)$ be a standard space and let $G$ be a group acting
  on $X$ such  that $\mu$ is $G$-invariant.
 We will say that a set $A \subset X$ is 
 \emph{absolutely non-measurable} if it is not measurable with respect to any finitely additive $G$-invariant measure that extends $\mu$.
 We will say also that a set $E \subseteq X$ is \emph{weakly paradoxical}
 with respect to some $G$-invariant algebra $\mathcal{A}$
 if $(n+1)[E] = n[E]$ in the semigroup
 $\mathcal{S}(\mathcal{A})$ for some positive integer $n$,
  meaning that one can pack $n+1$ copies of $E$ into $n$ copies of $E$. Notice that $(n+1) [E] =n[E]$            
 implies that $m[E] = n[E]$ for all $m>n$. What may be problematic is walking this back to $2[E] =[E]$.
  When $2[E] = [E]$, then we will say that $E$ is \emph{strongly paradoxical}.\\
 \indent In many situations the proof of existence of weak paradoxical decomposition needs some additional assumptions. But we can
 introduce the following notion that leads to a theorem that justifies the term ``paradoxical colouring rule''.
 Let $\mathcal{A}$ be the $\sigma$-algebra generated by the Borel sets and the colour classes of some colouring determined by a paradoxical rule.
 We say that a Borel subset $A$ of a standard Borel
 space $(X, \mu)$ is \emph{measurably $G$-paradoxical} if there exists a Borel set $B$ with $\mu(A) \neq \mu(B)$ and $A$ is $G$-equidecomposable 
 to $B$ using pieces in $\mathcal{A}$.\\
  \indent We have also the following notion that sharpen the definition of paradoxical rule; we say that a paradoxical finitary rule is \emph{superparadoxical}
 if any colouring satisfying the rule is not measurable with respect to \emph{any} finitely additive $G$-invariant measure defined on the $\sigma$-algebra $\mathcal{A}$.
 The motivation for this notion is that it harmonizes with the Tarski's Theorem B and as we conjecture it relates paradoxical rules to the Greenleaf problem mentioned in the introduction.\\
 \indent We mentioned above that to get more paradoxical decompositions we need some additional assumptions. Here we show that under some additional assumptions
the space exhibits the strongest form of paradoxical decompositions.\\
  \indent Let $X$ be a metric space. Consider now the type $[U]$ of a bounded open set $U \subset X$, defined for the semigroup of types $\mathcal{S}(\mathcal{A})$, where $\mathcal{A}$ is a $\sigma$-algebra of subsets of $X$.
 We will say that bounded open sets in $X$ \emph{admit sequences of similar sets} if for any bounded open set $U$ there exists a positive integer $n$ such that $(n+1)[U] = n[U]$ and there exists a sequence
 ${V_i}$ of open sets with diameters decereasing monotonically to $0$ and such that $(n+1)[V_i] = n[V_i]$. An example of a metric space where open sets admit sequences of similar sets
 provides Euclidean space. To observe it we use some conjugations by similarities.\\       
 \indent Since the finitely additive measures appearing in the definition of paradoxical rules are not a priori unique, we need a kind of notion that
 assures the uniqueness. These topic is related to some properties of groups acting on the space in question.\\ 
 \indent Let $\mathbb{S}^n$ be the $n$-dimensional Euclidean sphere. Recall that for $F \in L^2(\mathbb{S}^n, \lambda)$ with $\lVert F \rVert_2 = (\int_{\mathbb{S}^n} |F|^2  d \lambda)^{1/2}$ 
 the action of a countable group of isometries $G$ on $\mathbb{S}^n$ has spectral gap if there exist a finite set $S \subset G$ and a constant $\kappa > 0$ such that
 $\lVert F \rVert_2 \leq \kappa \sum _{g \in S} \lVert g \cdot F -F  \rVert_2 $ for any $F \in L^2(\mathbb{S}^n, \lambda)$ with $\int_{\mathbb{S}^n} F d \lambda = 0$. Clearly, the notion of spectral gap can be extended to probability spaces.\\
 \indent Recently R. Boutonnet, A. Ioana and A. S. Salehi Golsefidy [3] generalized the notion of spectral gap as follows: 
 Let $G$ be a countable group acting on a standard measure space $X$ then the action has \emph{local spectral gap} with respect to a measurable set $B \subset X$ of finite measure
 if there exist a finite set $S \subset G$ and a constant $\kappa > 0$ such that
 $\lVert F \rVert_{2, B} \leq \kappa \sum _{g \in S} \lVert g \cdot F -F  \rVert_{2,B} $ for any $F \in L^2(\mathbb{S}^n, \lambda)$ with $\int_{B} F d \lambda = 0$. Here $\lVert F \rVert_{2, B}$ denotes $(\int_{B} |F|^2 d \lambda)^{1/2}$.\\  
  \indent The notion of local spectral gap appears implicitely, in the case of action of group of isometries on $\mathbb{R}^n$, in Margulis [9] and is used to get a positive answer to Ruziewicz problem for $\mathbb{R}^n$ $(n \geq 3)$.\\
  \indent Let $X$ be a metric space and let $G$ be a group acting on $X$, we will say that the action of $G$ is \emph{paradoxical} if any two bounded subsets of $X$ with compact closures and non-empty interiors are $G$-equidecomposable 
 with respect to the algebra of all sets.\\
 \indent It is well-known that if the action of a group $G$ is paradoxical and has spectral gap than the finitely additive extension of the completed Borel measure is unique (see [3],[6]). Therefore we have a broad
 class of spaces where the uniqueness of our extension holds.\\

\noi \textbf{Theorem 1:} \emph{ Let $F$ be a paradoxical rule with finitely many colours defined by a
  topologically transitive
  action of a  group $G$ on a
   standard Borel space $(X, \mu)$ with $G$-invariant measure $\mu$. Then:\\
 $(i)$ \ \ For any colouring $c$ satisfying the rule $F$
    there exists a Borel set that is measurably $G$-paradoxical with respect to the $G$-invariant $\sigma$-algebra $\mathcal{A}$ generated by the Borel sets
 and all colour classes of $c$;\\
 $(ii)$ \ \ Assuming additionaly that the extension of $\mu$ to $\mathcal{A}$ is unique, for any colouring $c$ that satisfies $F$,
   any bounded open subset of $X$ is weakly $G$-paradoxical with respect to $\mathcal{A}$; \\
 $(iii)$ \ \  Moreover, suppose that $X$ is metrizable and open subsets of $X$ admit sequences of similar sets. Then 
   any compact set of $\mathcal{A}$ with non-empty interior is strongly paradoxical using pieces in $\mathcal{A}$;\\
 $(iv)$ \ \ If the rule $F$ is superparadoxical, then any compact subset of $X$ with non-empty interior is weakly $G$-paradoxical in $\mathcal{A}$.}\\

 \emph{Proof of part (i).} It is easy to see that any measure on the semigroup of types $\mathcal{S}(\mathcal{A})$ induces a finitely additive $G$-invariant measure $m$ on the corresponding
 algebra $\mathcal{A}$. Indeed, for any subset $A \in \mathcal{A}$ we put $m(A) = \nu ([A])$, where $\nu$ is the measure defined on $\mathcal{S}(\mathcal{A})$ and $[A]$ is the type of $A$.\\
 \indent On the other hand, any finitely $G$-invariant measure $m$ defined on $\mathcal{A}$ induces a measure on $\mathcal{S}(\mathcal{A})$. To get such a measure it is enough to sum up the $m$-measures of
 finitely many levels defining any type of $\mathcal{S}(\mathcal{A})$.\\
 \indent Suppose to the contrary that no Borel set is measurably $G$-paradoxical with respect to $\mathcal{A}$. Then the Borel measure $\mu$ induces a measure on the subsemigroup $\mathcal{U}$ of $\mathcal{S}(\mathcal{A})$ that
 corresponds to the types of Borel sets. Clearly, the measure is well-defined by the assumption that no two Borel sets are measurably $G$-paradoxical with respect to $\mathcal{A}$. Thus, by the Extension Theorem,
 there exists a measure on $\mathcal{S}(\mathcal{A})$ that extends the measure on $\mathcal{U}$. Clearly, the measure induces a $G$-invariant measure on $\mathcal{A}$ that extends $\mu$.
  Therefore the colouring $c$ is measurable with respect to $m$. But this contradicts the fact that $F$ is a paradoxical rule and finishes the proof. $\Box$ \\     

 \emph{Proof of part (ii).} Suppose to the contrary that there exists a bounded open subset of $X$ that is not weakly $G$-paradoxical with respect to $\mathcal{A}$. 
 Then Theorem B and the uniqueness of the extension of $\mu$ as the Borel measure
 imply the existence of a finitely additive, $G$-invariant measure $\nu$ on $\mathcal{A}$ with $\nu (U) = 1$.
 And this in turn implies that the colouring $c$ is $\nu$-measurable, in contradiction to the paradoxicality of the rule $F$. $\Box$\\

  \emph{Proof of part (iii).} First, we observe that since bounded open sets are weakly paradoxical and that $\mathcal{A}$ is a $\sigma$-algebra,
  there exists a positive integer $m$
 such that $(m+1)[U] = m[U]$ in $\mathcal{S}(\mathcal{A})$ for any open set $U \subseteq X$. And this implies:\\

   $(1)$ \ \ \ \  $k[U] = m[U]$ in $\mathcal{S}(\mathcal{A})$ for any integer $k \geq m$. \\

   Now take, a compact subset $K \subset X$ with nonempty interior and let $V \subset K$ be an open set. Since the action of $G$ is topologically transitive,
   $K$ can be covered by the union of
   $l$ congruent copies of $V$. Therefore we have:\\

  $(2)$ \ \ \ \  $[K] \leq l[V]$ in $\mathcal{S}(\mathcal{A})$ for some integer $l \geq 2$. \\

 Clearly, $(2)$ implies:\\

  $(3)$ \ \ \ \  $m[K] \leq ml[V]$ in $\mathcal{S}(\mathcal{A})$ for some integer $l \geq 2$, \\

 and by $(1)$ and $(3)$ we get\\

 $(4)$ \ \ \ \  $m[K] \leq ml[V] = m[V]$ in $\mathcal{S}(\mathcal{A})$. \\

 On the other hand, since $V \subseteq K$, we get $m[V] \leq m[K]$ which by the Banach-Schr\"oder-Bernstein theorem and by $(4)$ implies $m[K] = m[V]$ in
  $\mathcal{S}(\mathcal{A})$.\\
 \indent Since open sets admit sequences of similar sets, there are is an open set $W$ such that that $(m+1)[W] = m[W]$ and the union of $m$, $G$-congruent disjoint copies of $W$ is contained in $K$.\\
 \indent Hence, by the Banach-Schr\"oder-Bernstein theorem and by $(4)$ we get $m[K] = m[W]$ and also $m [K] \leq [K]$ in $\mathcal{S}(\mathcal{A})$. Clearly, $ [K] \leq m[K]$ is also true.
 Again, the Banach-Schr\"oder-Bernstein theorem imples $ [K] = m[K]$ in $\mathcal{S}(\mathcal{A})$.\\
 \indent Finally, to get the strong paradox we observe that $[K] \leq 2[K] \leq m [K]$ in $\mathcal{S}(\mathcal{A})$ and so the Banach-Schr\"oder-Bernstein theorem imples $[K] = 2[K]$ in $\mathcal{S}(\mathcal{A})$. $\Box$\\ 

 \emph{Proof of part (iv).} This follows in exactly the same manner like in the cases $(ii)$ and $(iii)$ taking into an account that any colouring $c$ is not measurable with respect to \emph{any} finitely additive
 $G$-invariant measure defined on $\mathcal{A}$. $\Box$ \\

   In the context of the above
   we may ask  the following question
    such that a positive answer would extend the results of Tarski:\\

   \noi  \textbf{Question 1:} Let $G$ be a group acting in a measure preserving way on a standard Borel probability space $(X, \mu)$. Is the existence of a $G$-paradoxical decomposition of
    a subset $E$ of $X$ with $\mu(E) > 0$ equivalent to the existence of a
    paradoxical colouring  rule using the group $G$ to define
     the descendents?  \\

     M. Bounds [2] obtained recently a partial solution to this question. Whenever there is a partition of the space $X$ witnessing $n[X] = (n+1) [X]$ he showed that there is a paradoxical
     colouring rule for which that partition is a solution.
      We note that a positive answer to Question 1  may  shed some light on the following problem of Greenleaf: Let $G$ be a group acting faithfully and transitively on a space $X$. What are neccessary and sufficient conditions for the
 action such that there is no finitely additive $G$-invariant probabilistic measure on all subsets of $X$?\\

\noi  \textbf{Question 2:} Let $F$ be a paradoxical rule and let $c$ be a colouring satisfying the rule. Are some of the colour classes
  of $c$ absolutely non-measurable?\\

  A positive answer to this question would lend relevance to the above mentioned result of M. Bounds.
   The following construction that shows that the pieces witnessing the Banach-Tarski paradoxical decomposition are absolutely non-measurable with respect to any $\sigma$-algebra that contains the pieces. It 
 indicates the answer to Question 2 might be positive:\\

 Let $F$ be a free non-abelian group of rank two generated by $s$ and $t$. 
 Consider the sets $w(s), w(s^{-1})$ and  $w(t), w(t^{-1})$ of reduced words of $F$ begining on the left on $s, s^{-1}$ and $t, t^{-1}$, respectively.
 Then we have:\\

 $(*)$ \ \ \ $s w(s^{-1}) = w(s^{-1}) \cup w(t) \cup w(t^{-1})$ and also $t w(t^{-1}) = w(t^{-1}) \cup w(s) \cup w(s^{-1})$.

 \begin{props}
   \label{props:1}
\emph{ Let $\mathcal{A}$ be the $G$-invariant $\sigma$-algebra generated by the set $w(s^{-1})$. Then  $w(s), w(t), w(t^{-1}) \in \mathcal{A}$.  }
 \end {props}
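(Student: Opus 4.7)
The plan is to use identity $(*)$ together with the $G$-invariance of $\mathcal{A}$ to successively pull more sets into $\mathcal{A}$, eventually showing it separates single points and hence, because $F$ is countable, contains every subset of $F$.

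The first two sets come immediately from $(*)$. By $G$-invariance $sw(s^{-1})\in\mathcal{A}$, and the first identity in $(*)$ identifies $sw(s^{-1})$ as the union $w(s^{-1})\cup w(t)\cup w(t^{-1})$, which is the complement of $w(s)$ in the initial-letter partition of $F$. Hence $w(s)=F\setminus sw(s^{-1})\in\mathcal{A}$, and a single set difference yields $w(t)\cup w(t^{-1})=sw(s^{-1})\setminus w(s^{-1})\in\mathcal{A}$. So the pair $\{w(t),w(t^{-1})\}$ is measurable as a union; the real task is to separate them.

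For the separation I would apply $G$-invariance a second time: $t(w(t)\cup w(t^{-1}))=tw(t)\cup tw(t^{-1})\in\mathcal{A}$, and rewrite $tw(t^{-1})=w(t^{-1})\cup w(s)\cup w(s^{-1})$ via the second half of $(*)$. Subtracting $w(s)\cup w(s^{-1})\in\mathcal{A}$ leaves $tw(t)\cup w(t^{-1})\in\mathcal{A}$, and subtracting this from $w(t)\cup w(t^{-1})$ (using $tw(t)\subseteq w(t)$ and the disjointness of $w(t)$ and $w(t^{-1})$) isolates $w(t)\setminus tw(t)$ in $\mathcal{A}$. A direct inspection of reduced words gives $w(t)\setminus tw(t)=\{t\}\cup tw(s)\cup tw(s^{-1})$; and since $tw(s),tw(s^{-1})\in\mathcal{A}$ by $G$-invariance applied to the already-constructed $w(s)$ and the given $w(s^{-1})$, one final set difference produces the singleton $\{t\}\in\mathcal{A}$.

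From $\{t\}\in\mathcal{A}$ the conclusion is automatic: for every $g\in F$ the singleton $\{g\}=(gt^{-1})\cdot\{t\}$ is in $\mathcal{A}$ by $G$-invariance, and since $F$ is countable this forces $\mathcal{A}=2^F$, giving in particular $w(t),w(t^{-1})\in\mathcal{A}$. The main subtlety I expect to confront is accurate bookkeeping of the identity element in $(*)$, since strictly speaking $sw(s^{-1})$ and $tw(t^{-1})$ each contain $e$ as a hidden summand. Following the paper's convention of absorbing $\{e\}$ into the initial-letter partition makes the peeling argument above go through verbatim; if one prefers to carry $\{e\}$ explicitly, the same calculation still produces $\{e,t\}\cup tw(s)\cup tw(s^{-1})$ in $\mathcal{A}$, from which the singleton conclusion is recovered by a parallel $s$-iteration to show $\{e\}\in\mathcal{A}$, after which the countability of $F$ finishes the argument exactly as above.
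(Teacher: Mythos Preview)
Your overall strategy—isolate a singleton, then use $G$-invariance and the countability of $F$ to conclude $\mathcal{A}=2^F$—is attractive and would indeed give a stronger conclusion than the paper states. But the bookkeeping with the identity element is not a cosmetic issue here; it is exactly where your argument breaks.

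Carrying $\{e\}$ honestly, the first steps give $A:=w(t)\cup w(t^{-1})\cup\{e\}\in\mathcal{A}$, and applying $t$ and then stripping off $w(s)\cup w(s^{-1})$ yields
\[
B:=tw(t)\cup w(t^{-1})\cup\{e,t\}\in\mathcal{A},
\]
not $tw(t)\cup w(t^{-1})$. Now $B\subseteq A$, and
\[
A\setminus B=\bigl(w(t)\setminus(tw(t)\cup\{t\})\bigr)=tw(s)\cup tw(s^{-1}),
\]
which you already knew from $G$-invariance. The singleton $\{t\}$ does \emph{not} emerge: the extra $\{e\}$ cancels against itself and the extra $\{t\}$ erases precisely the $\{t\}$ you were hoping to keep. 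Your claimed fallback, that the computation produces $\{e,t\}\cup tw(s)\cup tw(s^{-1})$, is therefore incorrect.

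More seriously, no finitary repair is possible. Every translate $g\,w(s^{-1})$ is a half-tree obtained by deleting an $s$-labelled edge of the Cayley tree, and the $t$-axis $\{t^n:n\in\mathbb{Z}\}$ crosses no $s$-edge; one checks directly that for each $g$ either all $t^n$ lie in $g\,w(s^{-1})$ or none do. Hence every set in the $G$-invariant Boolean \emph{algebra} generated by $w(s^{-1})$ either contains the whole $t$-axis or misses it entirely, so neither $w(t)$ nor any singleton on that axis can be reached by finitely many Boolean operations. The paper's proof exploits the $\sigma$-algebra hypothesis in an essential way: it iterates to obtain $w(t^{-1})\cup t^n w(t)\cup\{t^n\}\in\mathcal{A}$ for every $n$ and then takes the countable intersection, which collapses to $w(t^{-1})$. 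That infinitary step is not optional, and your one-shot subtraction cannot replace it.
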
 

 \emph{Proof.} By $G$-invariance of $\mathcal{A}$ we obtain that $s w(s^{-1}) \in \mathcal{A}$ and thus $w(s) \in \mathcal{A}$. Since $\mathcal{A}$ is closed under complementation we get that
 $w(t^{-1}) \cup w(t) \cup \{1\} \in \mathcal{A}$. Now, by $G$-invariance we get that $t(w(t^{-1}) \cup w(t) \cup \{1\}) = w(s^{-1}) \cup w(s) \cup w(t^{-1}) \cup tw(t) \cup \{t\} \in \mathcal{A}$ too. 
 Since $w(s^{-1}) \cup w(s) \in \mathcal{A}$, then subtracting it from $w(s^{-1}) \cup w(s) \cup w(t^{-1}) \cup tw(t) \cup \{t\}$ still leaves us in $\mathcal{A}$. So $w(t^{-1}) \cup tw(t) \cup \{t\} \in \mathcal{A}$.
 Iterating, the process of moving by $t$ and subtracting then by $w(s^{-1}) \cup w(s)$ we obtain that the set $w(t^{-1}) \cup t^nw(t) \cup \{t^n\} \in \mathcal{A}$ for any positive integer $n$.\\
 Now, since $\mathcal{A}$ is a $\sigma$-algebra and the group $F$ is free we have that 
 $$\bigcap_{n=1} ^{\infty} (w(t^{-1}) \cup t^nw(t) \cup \{t^n\}) = w(t^{-1}) \in \mathcal{A}.$$

 Now, by the $G$-invariance we infer that $tw(t^{-1}) \in \mathcal{A}$ and so $w(t) \in \mathcal{A}$ too. $\Box$\\

 \section{Easy Examples} The Hausdorff rule is of rank two.
  We need to  establish  that
 there are paradoxical  rules of rank one. The first example
 is  a rank one stationary rule with three
 colours   whose satisfaction is equivalent to the   
    satisfaction of   the Hausdorff rule. 
    Example \ref{ex:1}  was  given to us by
  M. Bounds [2].
  The second  and third  examples are rank one rules that mimick rank two rules. 
   We are not so interested in
  the second and third    examples because they represent an
   oversimplification, a  cutting of 
   a Gordian knot using an enlarged space or an enlarged
   colour set.\\
    \indent It is worthwile to note that the colour classes  of any colouring satisfying the rule from Example \ref{ex:1}  form a paradoxical decomposition of $X$. In particular,
 we can show that the unit sphere $\mathbb{S}^2$ up to a countable set of fixed points is paradoxical with sets defined by any of the colourings satisfying our rule.
 This in turn implies (by the arguments in Chapter 3 in [14]) a paradoxical decomposition of Banach and Tarski of the unit ball in $\mathbb{R}^{3}$.\\

 \begin{ex} \label {ex:1}  
   Let $G$ be any group generated
   freely  by $\tau, \sigma_1, \dots , \sigma_{k-1}$, where
   the order of $\tau$ is either infinite or divisible by $3$,
   and the order of all the $\sigma _i$ are either infinite or
    divisible by $2$. Let $X$ be a  probability
    space acted upon freely by $G$ almost everywhere  such
    that every element of $G$ is measure preserving. 
 There are three colours $A_1, A_2, A_3$ and the arithmetic
 is modulo $3$. Assume that $\tau^{-1} (x)$ is coloured $A_i$.
 If $\tau(x)$ is coloured either $A_i$ or $A_{i+1}$
 then colour $x$ with $A_{i+1}$. If $\tau (x) $ is coloured $A_{i-1}$
 then count how many points among $\tau (x), \tau^{-1} (x), \sigma _1 (x),
 \dots, \sigma _{k-1}(x)$ are coloured $A_1$ (which cannot 
 be all of them because $\tau (x)$ and $\tau ^{-1}(x)$ are coloured 
 differently). If the number is strictly
 between $0$ and $k$, colour $x$ with the same colour as that of 
 $\tau (x)$. Otherwise $x$ is
 coloured with $A_{i+1}$. \end{ex} 
 
 \begin{props} \label {props:1}  The colouring rule of Example 1 is paradoxical.\end{props}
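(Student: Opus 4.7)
The plan is to establish the two defining components of paradoxicality: non-contradiction (there exists a colouring satisfying the rule almost everywhere) and failure of measurability (no such colouring is measurable with respect to any finitely additive $G$-invariant extension $\mu$ of $m$). For non-contradiction I would lift a Hausdorff-type decomposition of $G$ to $X$ via an AC-chosen transversal for the free $G$-action: combinatorially partition $G = A_1 \sqcup A_2 \sqcup A_3$ with $\tau(A_i) = A_{i+1 \bmod 3}$ and $\sigma_j(A_1) = A_2 \cup A_3$, $\sigma_j(A_2 \cup A_3) = A_1$ for each $j$ (the free-product structure of $G$ together with the order restrictions on $\tau, \sigma_j$ is precisely what permits this, as in Hausdorff's original argument for $\mathbb{Z}_2 \ast \mathbb{Z}_3$). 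Then for such a $c$ one has $c(\tau^{-1}x) = A_i$, $c(x) = A_{i+1}$, $c(\tau x) = A_{i-1}$, landing in Case~3 of the rule; a direct count shows that the number of $A_1$'s among $\tau^{\pm 1}(x), \sigma_1(x), \ldots, \sigma_{k-1}(x)$ equals $k$ when $x \in A_2 \cup A_3$ and $0$ when $x \in A_1$, so never strictly between $0$ and $k$, and the rule prescribes $c(x) = A_{i+1}$ as required.

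For the failure of measurability, assume such a $c$ is $\mu$-measurable and write $p_i = \mu(A_i)$. Inspecting the rule case by case, $c(x) \neq c(\tau^{-1}x)$ holds in every case, since each allowed output lies in $\{A_{i+1}, A_{i-1}\}$; by $\tau$-invariance $c(\tau x) \neq c(x)$ a.e.\ as well. This exclusion immediately rules out Case~2 and the reversed sub-case of Case~3 of the rule. The crucial step is to eliminate Case~1 a.e.: if $c(\tau^{-1}x) = c(\tau x) = A_i$ and $c(x) = A_{i+1}$, then reapplying the rule at $\tau x$ (with input $c(x) = A_{i+1}$) lands $c(\tau x) = A_i$ in the reversed sub-case of Case~3 at $\tau x$, which in turn requires $c(\tau^2 x) = A_i = c(\tau x)$, contradicting $c(\tau y) \neq c(y)$ at $y = \tau x$. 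Hence a.e.\ the non-reversed sub-case of Case~3 is the only possibility, i.e.\ $\tau(A_i) = A_{i+1}$ modulo null sets, giving $p_1 = p_2 = p_3 = 1/3$. The count condition then becomes rigid: $x \in A_1$ forces no $\sigma_m(x) \in A_1$ (since the count must be $0$, as $k$ is unattainable there), while $x \in A_2 \cup A_3$ forces every $\sigma_m(x) \in A_1$ (since the count must equal $k$), so $\sigma_m(A_2 \cup A_3) \subseteq A_1$ a.e.\ for every $m$. Measure-preservation of $\sigma_m$ then yields $\tfrac{2}{3} = \mu(A_2 \cup A_3) = \mu(\sigma_m(A_2 \cup A_3)) \leq \mu(A_1) = \tfrac{1}{3}$, the required contradiction.

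The main obstacle I expect is the elimination of Case~1 of the rule a.e.: at a single point $x$, Case~1 is perfectly compatible with $c(x) \neq c(\tau^{\pm 1}x)$, and the contradiction only surfaces upon reapplying the rule at the $\tau$-neighbour. This two-step cascade is the precise mechanism by which the seemingly harmless $\sigma_j$-count test built into the rule is forced into the Hausdorff-type global rigidity that finitely additive measurability cannot sustain.
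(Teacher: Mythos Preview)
Your proposal is correct and follows essentially the same route as the paper: both show that any satisfying colouring forces $c(x)=A_{i+1}$ whenever $c(\tau^{-1}x)=A_i$, deduce that each $\sigma_j$ swaps $A_1$ with its complement, obtain the $\tfrac13$ versus $\tfrac23$ contradiction, and construct a satisfying colouring by lifting a Hausdorff-type partition of $G$ through an AC transversal. Your separate elimination of Case~1 is sound but slightly more elaborate than needed---the paper simply notes that the only output distinct from $A_{i+1}$ is the Case~3a output $A_{i-1}=c(\tau x)$, which is already excluded by $c(x)\neq c(\tau x)$, so $c(x)=A_{i+1}$ holds directly and (applied at $\tau x$) forces $c(\tau x)=A_{i-1}$, disposing of Case~1 automatically.
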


  {\bf Proof:} If the colouring $c$ satisfies the colouring rule,
 we claim that for almost all $x$ if $\tau^{-1} (x)$ is coloured $A_i$
 then $x$ is coloured $A_{i+1}$.  The colour given to $x$ by the rule
 is never the same as the colour given to $\tau^{-1}(x)$.
 The only way for $x$ to be not coloured with $A_{i+1}$
 is for  $\tau (x)$ to be  coloured $A_{i-1}$.
 But then   $x$ is also assigned the colour
 $A_{i-1}$, which is not possible
  because then $\tau (x)$ would not  have
  been assigned the colour  $A_{i-1}$. 
  Now that we know that the colours cycle forward in the $\tau$ direction,
  by the rule it must follow that  either
   none of  the $\tau (x), \tau^{-1} (x), \sigma _1 (x),
 \dots, \sigma _{k-1}(x)$ are coloured $A_1$ 
  or all but one are coloured $A_1$. If $x$ is coloured $A_1$ 
 then because both $\tau ^{-1} (x)$ and $\tau (x)$ cannot be coloured 
 $A_1$ it must follow that none  of the  $\sigma _1 (x),
 \dots, \sigma _{k-1}(x)$ are coloured $A_1$. If $x$ is not coloured $A_1$ 
 then at least one of $\tau ^{-1} (x)$ and $\tau (x)$ is coloured 
 $A_1$, hence all of  the  $\sigma _1 (x),
 \dots, \sigma _{k-1}(x)$  are coloured $A_1$.

  Assuming that the colouring is measurable with respect to some
 finitely additive $G$-invarient
 extension measure, it
  follows from the measure preserving properties of all
 the generators that the global probability for $A_1$ must be
 simultaneously at most $\frac 13$ and at least $\frac 23$,
 a contradiction. 
 Next we show that the colouring rule can be satisfied.
 
 Choosing an $x$ in  any  orbit of $G$ (using some axiom of choice)  for
  which $g_1 (x) = g_2 (x)$ implies that $g_1 = g_2$,  colour $x$ with 
  $A_1$, then colour the $\tau$ orbit of $x$  accordingly. Then
  continue with the rule that if $y$ is coloured $A_1$ then
  all of the $\sigma_i (y)$ and $\sigma^{-1} (y)$  are not coloured $A_1$ and
  if $y$ is not coloured $A_1$ then all
  of the $\sigma _i (y)$ and $\sigma_i^{-1}$
    are coloured $A_1$. Continue the process indefinitely on the whole orbit. \hfill $\Box$  
 \\

\begin{ex} \label{ex:2} 
Assume that $R$ is a paradoxical rule
using a semigroup $G$ acting on $X$.
Let $\hat X$ be 
    $X\times  \{ a,b\}$, with the probability 
    given to both  $\{a\} \times A$ and  $\{b\} \times A$ half of
     that given to $A$ in the space $X$. 
     We define $\rho$ to be the measure preserving
     involution that switches between $(a,x)$ and $(b,x)$
     for every $x\in X$. 
     We use the same colours, and assume that there are at least two colours.
      The new rule    assigns  to
    every  $(b,y)$ the same colour as that of $(a,y)$.
     The rule for colouring $(a,y)$ 
    is   easy to describe.   If the colour
     of $(b,y)$ follows the rule $R$ with respect
     to the  $(a,z_i)$ for all the descendents $z_i$ of $y$,
     as it should be after dropping the $a$ and $b$ coordinates,
      then
    colour $(a,y)$ the same colour as that of $(b,y)$.
    Otherwise colour $(a,y)$ differently than
     $(b,y)$.    
    In any  colouring satisfying this new  rule the points   
    $(a, y)$ and $(b,y)$ are given the same colour, implying
    that the $R$  rule is  being  followed on both
    copies of $X$.\end{ex} \vskip.2cm 

    \begin{ex} \label {ex:3} 
  Assume  that $R$ is a paradoxical rule
  using a semigroup $G$ acting on $X$ where one of the descendents is  defined
   by an invertible $g\in G$.
   Let $C$ be the colouring set of the rule $R$ and instead
    colour $X$ with $C^2$ colours. 
    The new rules requires that 
     the second colour of  $x$ is the 
    copy of first colour of $g (x)$, and the first colour of $x$ agrees
    with the second colour of $g^{-1} (x)$ if and only if the
    second  colour of $g^{-1} (x)$ is the correct colour for $x$ when
    following the rule $R$  with respect to the
    first colours of descendents of $x$. The process is really the same
    as that of Example \ref {ex:2} , with the second colour of $g^{-1} (x)$ playing
     the same role as the colour of  $(b,x)$ in Example \ref{ex:2} . 
    \end{ex}

     \section{Proper Colouring}

       In this section 
     we  present a context in which the proper vertex colouring rule  is paradoxical. 
 The distinction between stationary and non-stationary rules is especially important with proper colouring. With proper colouring, we make no  requirement of  the  colouring
   other than that the colour of $x$ and the colour of any
   descendent of $x$ must be different.  With other colouring
   rules, we could keep the finite set of descendents fixed  and allow some of them
   to  be irrelevant, dependent on location.
   But with proper colouring all desecendents must be relevant and in the same way. The best way to resolve this is
   to define  our proper colouring rule  to be    non-stationary if the subset of  descendants relevant to proper colouring  changes with location. For one, this
    allows the degree of the relevant graph to vary by location. If all
    descendents are relevant everywhere to the proper colouring  then it is stationary, and free action almost everywhere implies
     that almost everywhere the degree of the vertices  is a constant.
     If the rule is non-stationary, we do require the Borel property, meaning in the context
     of finitely many colours and finitely many descendents that the subset of relevant descendants is determined by membership in finitely many Borel sets
      that partition the space.

    Below we present an example where   non-stationary proper colouring  is paradoxical.  
      Before we do this, we first present a paradoxical  colouring rule where some  colour is always allowed, meaning rank one.
      We move from this to a proper  list colouring that is paradoxical, and finally
       to a non-stationary proper colouring that is paradoxical.    
       \vskip.2cm
        \noi 
\subsection { The fundamental example:}

\begin{ex} \label {ex:i} 
 With $X= \{ -1,1\} ^{\mathbb{F}_2}$,  
 let  $T_1$ and $T_2$ be the two generators of $\mathbb{F}_2$.
 We  create four colours 
 $a_{1, u} $, $a_{1, c} $, $a_{2,u} $ and $a_{2,c} $, the $c$ or $u$ standing for  ``crowded'' or ``uncrowded''. The colour is broken into two parts, its
  active part, the $1$ or $2$, and its passive part, the $c$ or $u$. 
  From every $x\in X$ we must direct  an arrow
  from  $x$ to one of two of its  neighbours. If $x^e=-1$, our choice is
  between $T^{-1} _1x$ or $T^{-1} _2x$. If $x^e=1$ our choice
  is between $T_1x$ or $T_2 x$. 
  If both of our choices for such neighboring points
  are coloured with the passive colour
  $c$ (meaning either $a_{1,c} $ or $a_{2,c}$)
  or both of our choices are coloured with  $u$,  the colouring
    rule allows us 
    to place the arrow in either direction. 
    But if one of these two points
     is coloured with $c$ and the other is coloured with 
    $u$ then the colouring rule demands that
     the arrow must be placed toward the one coloured with  $u$. 
     If two or more arrows are directed toward a point $x$,
     then the colouring rule requires that
     its passive  colour is   $c$. Otherwise its passive
      colour must be   $u$.
      The active colour $1$ or $2$ refers to the direction
       of the arrow, the active colour $i$ if the arrow is directed 
       to  either  $T_i$ or $T^{-1} _i$. Notice that every point receives
       an active and a passive colour and that the colouring scheme
       pertains to two independent systems on $X$ that are separated
        by alternating applications of the generators. 
  \end{ex}

  \noi  We define the p-degree of a vertex
   as the number of potential arrows that could be pointed
   toward this vertex  (the {\em passive} degree,
  to distinguish from the usual definition of  degree in  a 
    graph).

       \noi
       \begin{props} \label {props:t}  The colouring rule of
         Example \ref{ex:i}  is paradoxical.
\end{props}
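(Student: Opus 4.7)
The plan is to verify the two halves of paradoxicality separately.

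For \emph{non-contradictoriness}, the rule is purely local once an arrow function $f:X\to X$ with $f(x)\in\{c_1(x),c_2(x)\}$ has been chosen. Using the Axiom of Choice pick a transversal to the almost-everywhere free $\mathbb{F}_2$-action on $X$, and build $f$ on each orbit inductively by growing outward from the chosen base vertex. At each newly reached $x$ the only active constraint is the forcing rule ``point to the uncrowded candidate when the two candidates have differing passive colours''; the freedom retained in the remaining cases (both candidates uncrowded, both crowded, or one still undetermined) prevents deadlocks. The passive and active colours are then read off from the resulting in-degrees and the arrow directions of $f$.

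For \emph{paradoxicality}, suppose $c$ is a satisfying colouring measurable with respect to some finitely additive $\mathbb{F}_2$-invariant extension $\mu$ of the Bernoulli measure on $X=\{-1,1\}^{\mathbb{F}_2}$. Let $U,C$ denote the uncrowded and crowded passive classes and set $p=\mu(U)$. Because each $x$ emits exactly one arrow, the mean in-degree equals $1$, while each $y\in C$ absorbs at least two arrows, so $2(1-p)\le 1$ and hence $p\ge 1/2$. The forcing rule implies that an arrow lands in $C$ only when both candidates of its source already lie in $C$, so
\[
\beta\;:=\;\mu\bigl(\{x:c_1(x)\in C,\,c_2(x)\in C\}\bigr)\;=\;\int_C \#f^{-1}(y)\,d\mu(y)\;\ge\;2(1-p).
\]

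The contradiction follows once one shows $\beta<2(1-p)$ for $p\in[1/2,1)$. The Bernoulli product structure of $X$ combined with the measure-preservingness of the two distinct generators $T_1,T_2$ must carry the day: after the change of variables $y=c_1(x)$, the integral $\beta$ becomes an overlap of $\mathbb{1}_C$ with a translate of itself through a word such as $T_1 T_2^{-1}$ or $T_2 T_1^{-1}$, which should be small because these words send $C$ into coordinate directions almost independent of the original $C$. Formalising this is the main obstacle: naive bounds such as $\beta\le 1-p$ or $\beta\le 2(1-p)$ are insufficient, and one must either exploit the non-amenability of $\mathbb{F}_2$ directly (via a spectral-gap or Koopman estimate), or invoke Theorem~1 applied to the $\sigma$-algebra $\mathcal{A}$ generated by the Borel sets and the colour classes of $c$ to extract a measurably $\mathbb{F}_2$-paradoxical Borel set, contradicting the existence of the extension $\mu$. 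Careful bookkeeping of the $x^e=\pm 1$ branching, which toggles between forward and backward candidates, is the technical heart of this step.
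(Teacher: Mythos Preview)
Your construction of a satisfying colouring via a transversal and outward growth is essentially the paper's argument, and is fine.

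The paradoxicality half, however, has a genuine gap.  You correctly observe that an arrow can land in the crowded set $C$ only when both candidates of its source lie in $C$, and hence that the total flow into $C$ equals $\beta=\mu\{x:c_1(x),c_2(x)\in C\}\ge 2\mu(C)$.  But then you need $\beta<2\mu(C)$, and you openly concede you cannot prove it: the appeal to a spectral-gap estimate is wishful (no such bound on $\mu(C\cap gC)$ is available for an arbitrary $\mu$-measurable set $C$), and invoking Theorem~1 is circular, since Theorem~1 \emph{assumes} the rule is already paradoxical.

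What you are missing is that one should show $\mu(C)=0$ directly, and this is done not with the extension $\mu$ but with the \emph{Borel} Bernoulli measure on $\{-1,1\}^{\mathbb{F}_2}$.  The observation above says that a crowded $x$ forces its alternative candidate $w$ to be crowded as well, with at least two arrows into $w$ not coming from the direction of $x$; iterating produces an infinite backward tree, branching two- or three-fold at alternate levels, in which every branch vertex has p-degree at least~$3$.  Because the p-degrees are determined by i.i.d.\ $\pm1$ coordinates, the probability $p$ that such a tree survives satisfies the recursion $p=\tfrac38 p^2+\tfrac18(3p^2-2p^3)$, whose only real root is $p=0$.  Hence the crowded set is contained in a Borel null set.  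Once every vertex is uncrowded a.e., the arrow map is an almost-everywhere injection (via the generators $T_1^{\pm1},T_2^{\pm1}$) from $X$ into the Borel set of vertices of p-degree at least~$1$, which has measure $15/16$; this contradicts invariance of any finitely additive extension.  The percolation-style computation, not a soft measure-theoretic inequality, is the heart of the argument.
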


       \begin{proof}  
    The vertices with p-degree zero   take up $\frac 1 {16}$ of the space.  
    Our claim is that with a colouring satisfying the rule
    almost all vertices are uncrowded, meaning they  have the
    passive colouring $u$. 
    This implies that the colouring cannot be measurable with respect
    to any finite extension for which the group $\mathbb{F}_2$ is measure
     preserving, 
   because  at least $\frac 1 {16}$ of the vertices  cannot have
   any arrows pointed  toward them. By one accounting
   there is an arrow exiting every vertex but by another
   accounting the average number of arrows coming in to vertices
    must be no more than
    $\frac {15} {16}$. Another way to see the paradox is that the arrows define
    an almost everywhere injective mapping via group elements from the
     whole space to a cylinder set of measure no more than $\frac {15} {16}$.

       \vskip.2cm

       \noi
   Now let us assume that $x$ is a crowded vertex and see what is necessary
   to maintain this situation in a colouring satisfying the rule.
   There must be two distinct vertices $y$ and $z$ such that
   there is an arrow from $y$ to $x$ and an arrow from $z$ to $x$. Lets
   focus on just one of them, without loss of generality the $y$. As the
   colouring rule is satisfied,
   the existence of an
    arrow from $y$ to $x$ implies that the other  vertex
    toward which  $y$ could direct an arrow
     is another crowded vertex, call it $w$.
   As the arrow
   is already defined from $y$ to $x$
   it means that there are at least  two arrows
   pointed inward to  $w$ that do not start at $y$. Letting $v_1$ and $v_2$
   be two of those vertices, let $u_1$ and
   $u_2$ be the vertices for which there could have  be an arrow
   from $v_i$ to $u_i$, however instead the arrow was
   from the $v_i$ to $w$. We recognise by induction
   the existence of a chain of backwardly directed
    arrows, starting at $w$, moving to
    $u_1$ and $u_2$ (from  vertices so far un-named)  and beyond, such that
    at alternating stages 
    the induced graph branches into either two or three directions. 
   If there are three
    such branches in some places, we could reduce
    the problem to  the existence of a binary tree. Each of these
   vertices of the binary tree  has p-degree at least three. 
   Now let $p$ be the probability of there existing
   such an infinite chain, the probability relative
   to the start at a   vertex like $y$
   moving in the direction  away
    from  $x$. 
As the space is
defined homogeneously (that the probabilities for $-1$ or $1$ are
independent regardless of  shift distances)
we can calculate $p$ recursively. From the start, at $x$ and  $y$,  
there are two possibilities: the next vertex $w$  could be of p-degree
three and the chain of backward arrows
continues with these two adjacent vertices on the other side of
$y$,  or $w$ is of p-degree four and 
the chain continues with at least two of these three vertices on
the other side of $y$.
In the first case the conditional  probability that $w$ is of p-degree three
is $\frac 38$ (conditioned on the move from 
      $y$ to $w$) and then of continuation of the chain indefinitely 
happens with probability $p^2$. In the second case,
the conditional  probability of p-degree four is
$\frac 18$ and the probability of continuation $3p^2 - 2p^3$
(three choices for  the two next vertices  minus the
possibility, counted twice,
that continuation is possible in all three directions).
We have the
formula $p = \frac 38 p^2 + \frac 18  (3p^2 - 2p^3)= \frac {3p^2 - p^3} 4$.
Factoring out the $p=0$ solution, we are left
with $p^2 -3p +4$, which has no real solutions.
We conclude that the  stochastic structure of p-degrees  does not allow
     for crowded points to exist in more than a set of measure zero.

       \vskip.2cm

       \noi     
     Now we show (using AC) that there does exist a  colouring satisfying
      the rule. For every orbit choose a representative $x$ and label every other vertex in this same orbit as $gx$ by the group element $g$ used
      to travel from $x$ to $gx$. Every group element $g$ has a length, the
      minimal  number of uses of $T_1$, $T_1^{-1}$, $T_2$ and $T_2^{-1}$
      used to construct $g$ (where the length of
      the identity is zero). Let the length of a vertex $gx$ in the orbit be the length of $g$. Colour $x$ with either colour $1$ or $2$.
       Colour all  vertices of length $1$ next, then all
      vertices of length $2$, and so on in the following way. At every stage of the process, a vertex of length $l$ is adjacent to one vertex
     of length $l-1$ and three of length $l+1$. Therefore from any vertex $y$ of length $l$ one can always point the arrow toward a vertex of length $l+1$, regardless of the value
       of $y^e$. There can be no other arrow toward $y$, as the three other points adjacent to $y$ are all  of length $l+2$.
      Seeing that there are no vertices receiving two arrows, all vertices can be coloured with either $a_{1,u} $ or $a_{2,u} $. \end{proof}

       \vskip.2cm

       \subsection {  Proper list colouring:}

\begin{ex} \label {ex:l} 
       \noi  Consider the four group elements
       $T_1 T_2^{-1}$, $T_1^{-1} T_2$, $T_2 T_1^{-1} $, $T_2^{-1} T_1$, and the
 twelve  length four group elements created by multiplying two of them together.
 Let $g_1, \dots , g_{16}$ be
 these distinct $16$  group elements,
 with $g_1, \dots , g_4$ the ones of length $2$ and
 the $g_5, \dots, g_{16}$ the ones of length $4$.
  Let $X'$ be the subset of $X$ such that $g$ acts freely on $X'$. Notice that these 16 group elements are paired up by inverses. 
  By (see [1] or [4]) there are
  $17$ Borel sets, partitioning the space $X'$,
  such that $g_i x$ and $x$ belong to different sets 
  for every choice of $i$ and $x\in X'$. To each of these $17$ sets
   associate a colour. 
  For every $x$, we create a list of colours of
  cardinality $2$ from the list of $17$ colours.
  If $x^e= 1$, then the colours
  allowed at $x$ are the colours given to $T_1 x$ and $T_2 x$ (by membership
   in the Borel sets). 
  If $x^e= -1$, then the colours allowed at $x$
  are the colours given to $T^{-1} _1 x$ and $T^{-1} _2 x$.
  This structure follows that of Example  \ref{ex:i}, with the same
  concept of two potential arrows directed in two directions as determined
   by the $e$ coordinate.

       \vskip.2cm

       \noi  Now we define the adjacencies for the purpose of the
       list colouring.  The point $x$ is adjacent to
       $y$ if there is a $z$ such that an arrow could be pointed
       from $x$ to $z$ and an arrow could be pointed from $y$ to $z$
       as with Example \ref {ex:i}.  Notice that these points $x$ and $y$ differ by one of the $g_1, g_2, g_3, g_4$.   In this way, a   graph is
       created by  cliques such that for every  $z$ there is
       a clique of size equal to the p-degree of $z$, consisting of the vertices from which an arrow could be directed to $z$.  We call this clique
       the clique centered at $z$. 
      If $x$ is in $X\backslash X'$ then
      we assume it  has no adjacencies, so it is irrelevant which
       two colours are in the list. This graph of cliques we call the {\bf secondary}  graph. 
\end{ex}

       \noi Next we attempt to colour the vertices of this secondary graph 
 (of cliques)       using the same set of $17$ colours,
         forgetting how each point was coloured  originally, but  retaining
         the lists of colours and the
         above defined graph adjacencies from 
         cliques centered about points.

         \begin{props} \label {props:l}
           Proper list colouring of the secondary graph of  Example \ref{ex:l} is paradoxical.
         \end{props}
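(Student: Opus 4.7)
\emph{Proof plan.} The plan is to establish an equivalence between proper list colourings of the secondary graph and arrow-choice structures of the type in Example \ref{ex:i}, and then to derive the paradox as a sharpened form of Proposition \ref{props:t}. Each vertex $x$ has a 2-element list consisting of the Borel-set labels of its two potential arrow targets, so assigning $x$ one of its two list entries is literally the same as selecting one of its two possible outgoing arrows.

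The key step is to show that properness on the clique centred at any vertex $z$ is equivalent to the condition that at most one arrow is directed into $z$. Writing $c_z$ for the label of $z$, the label $c_z$ lies in every list in this clique, so properness forces at most one clique member to receive colour $c_z$. For the converse I must verify that the alternative-target labels $c_{w_x}$ assigned to the remaining clique members are pairwise distinct and distinct from $c_z$. A short computation shows that any two alternative targets $w_x, w_y$ differ by an element of $\{g_1,\dots,g_{16}\}$ (a product of two length-$2$ elements, which, since the only cancellation to the identity occurs between inverse pairs, always lies in this list), and each $w_x$ differs from $z$ by an element of $\{g_1,\dots,g_4\}$; the $17$-set Borel partition of [1] or [4] then separates them all. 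This labelling step is the main obstacle: without the particular partition chosen, the converse direction of the clique equivalence could collapse.

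Once the equivalence is in place, the paradox follows as in the proof of Proposition \ref{props:t}. Every vertex emits exactly one arrow and receives at most one, so the arrow map is injective; and since the vertices of p-degree zero form a set of measure $\frac{1}{16}$, any finitely additive $\mathbb{F}_2$-invariant extension of the measure for which the generators remain measure preserving would transport the whole space, piecewise by those generators, injectively into a subset of measure at most $\frac{15}{16}$, a contradiction. For non-contradictoriness I would reuse the AC construction from Example \ref{ex:i}: fix a representative in each $\mathbb{F}_2$-orbit, define the word length of each vertex relative to it, and at every vertex $y$ of length $l\ge 1$ direct the arrow toward an allowed target of length $l+1$. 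This is always possible since $y$ has only one neighbour of length $l-1$ and at most one of the two $y^e$-allowed targets can be that predecessor. The arrow graph is then a forest, every vertex receives at most one arrow, and the induced assignment from the lists is therefore a proper list colouring.
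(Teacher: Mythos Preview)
Your proposal is correct and follows essentially the same route as the paper. Both arguments identify a list colour choice at $x$ with one of the two outgoing arrows from Example \ref{ex:i}, use the fact that $z$ and the alternative target $w_x$ differ by one of $g_1,\dots,g_4$ (so each list has two genuinely distinct entries) together with the fact that two alternative targets $w_x,w_y$ differ by one of the length-four products $g_5,\dots,g_{16}$ (so pointing both arrows away still yields distinct colours), derive the contradiction from the injective arrow map into the complement of the p-degree-zero cylinder of measure $\tfrac{1}{16}$, and invoke the AC length-increasing orientation from Proposition \ref{props:t} for non-contradictoriness. Your write-up is in fact slightly more explicit than the paper's in stating the full two-way equivalence and in handling the mixed case (one arrow toward $z$, one away), which the paper leaves implicit in the phrase ``two distinct colours in every list''.
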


         \begin {proof} First assume we have a proper list colouring. 
       Because the original  colours are distinct
       on either side of the $g_1, \dots , g_4$,  there are two distinct
       colours in every  list, and they correspond to the two
       directions an arrow can be directed, in the same manner as with 
       Example \ref{ex:i}.  
       That the colouring is proper
       implies that there are no two arrows directed
       to the same point. If a list  colouring is proper and
       measurable (finitely additive $G$-invariant),
        there is one  arrow  coming out of every point in $X'$ 
        however coming  toward the  centres of cliques 
       the global average  is no more than $\frac {15} {16}$.
       As with Example \ref{ex:i}    each point can be  shifted according to
        its colour in the direction
       of its arrow to define 
        a measure preserving  injective map
       from all of the space to  a finite collection of  cylinders  of  measure
         $\frac {15}  {16}$. 

        Now we show that there are proper list  colourings of the secondary graph. 
Because  the colours are different on either side of the
length four  elements  $g_5, \dots g_{16}$,  if it is possible to
point an arrow from two distinct $x,y\in X'$ toward some $z$ but the choice
is made instead
 to point both of these arrows in the other directions,  $x$ and $y$ cannot
 get coloured with the same colour. (This is important, because they
 share a clique, the one centered at $z$.) 
 It follows from this that any orientation of arrows satisfying
  the colouring rule in Example
  \ref{ex:i} will
  also  define  a proper list colouring of the secondary graph. \end{proof} 
 
   \subsection { Proper colouring:}

       \vskip.2cm
\begin{ex} \label {ex:p}  
  We use the same structure of Borel sets and colours of
   Example  \ref{ex:l} and  assume that
        the number of sets/colours of that example
         is minimal, meaning that each set/colour  
       is given positive probability and, due to
       the ergodic property of each of the $g_1, \dots , g_{16}$,
       for every positive $\epsilon$  there is an odd  $N$ such that
       from all but a Borel subset $Q$ of measure no
       more than  $\epsilon$  for all $x$ in  $X\backslash Q$ all
       the  colours appear within a radius $N$ of $x$ when
 applying words of odd  length. (By ergodicity, this is true when restricted to just one of the 16 group elements $g_i$.) 
 We call this the original colouring of $X$. We create two copies of the space
  $X$, with the map $\rho$ defining the ``identity'' map 
  from the first copy to the second copy. If it helps to understand,
  we could say that each element of $G$ commnutes with $\rho$ and $\rho$ is a measure preserving involution between the two halves of the new space, each half given
   probability $\frac 12$.
  We keep the previous adjacency relations of the secondary graph of Example \ref{ex:l} 
  within the first copy of $X$, 
  meaning adjacencies  defined by
  cliques centred  about every $z$ of size equal to
  the p-degree of $z$.  In the second copy of $X$, for every $x$ 
  we connect with  edges the point  $\rho (x)$ with all the  $\rho (y)$
  such that  $y$ is 
  within a distance of $2N+10$ from $x$ applying  words of even length and
   $y$ coloured differently   from $x$ in the orignal colouring.  Furthermore
   any $x\in X\backslash Q$ in
   the first copy is connected to any $\rho (z)$ in the second copy
   if one moves from $ x$ to $z$ with a word of odd length
   of distance less than or equal to
    $N$  and $z$ is not coloured the same originally   as either 
    $z_1$ or $z_2$ such that
    there is a potential arrow from $x$ to the  $z_i$
    (according to the  contruction of Example \ref{ex:i}, which continues to
     define the structure of Example \ref{ex:l} and this example).
    If $x$ belongs to $Q$, then from $x$ in the first copy there
     are no adjacencies.

       \vskip.2cm

       \noi   Next we assume that $\epsilon$
       is strictly less than $\frac 1 {512}$,
       $N$ is so defined according to $\epsilon = \frac 1 {512}$,
       we drop the original colouring
       but keep the adjacencies that then define a graph with finite degree.
\end{ex} 

\begin{props} \label{props:p} 
  Proper colouring of Example \ref{ex:p} is paradoxical.
\end{props}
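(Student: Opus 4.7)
The proof reduces Proposition \ref{props:p} to the paradoxicality already established in Examples \ref{ex:i} and \ref{ex:l}. The second copy is designed to rigidify any measurable proper $17$-colouring so that on each orbit of the even-length subgroup $H \leq \mathbb{F}_2$ it agrees with the original $17$-colouring $c_0$ up to a fixed permutation. The cross-edges then constrain the colour at each first-copy vertex to be $\pi \circ c_0$ of one of its two potential arrow targets, and the clique structure of the first copy forbids two such arrows from pointing at the same $z$. This recovers an arrow orientation of the Example \ref{ex:i} type, whose paradoxicality yields the contradiction. Existence is obtained by importing the AC-derived orientation from the proof of Proposition \ref{props:t}.

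\textbf{Measurability failure.} Suppose $c$ is a measurable proper $17$-colouring with respect to a finitely additive $(\mathbb{F}_2 \times \langle \rho \rangle)$-invariant extension $\mu$. First, I rigidify the second copy. Ergodicity of $H \curvearrowright X$, together with the choice of $N$ and the $+10$ slack, guarantees that for all $x$ outside a set of measure at most $\epsilon$, the even-length ball of radius $N+5$ around $x$ hits every original colour class. Any two originally-distinct points in such a ball are adjacent in the second copy, so ``original class $\mapsto$ proper colour'' is an injection into the $17$-element colour set, hence a bijection $\pi$. Overlapping balls propagate a single permutation $\pi$ along the whole $H$-orbit. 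Next, I extract arrows: from $x \in X \setminus Q$ the cross-edges single out exactly those $\rho(z)$ whose $c_0$-class is distinct from both $c_0(z_1(x))$ and $c_0(z_2(x))$, so $c(x)$ must avoid the $15$ colours $\pi$ of those other classes; hence $c(x) \in \pi(\{c_0(z_1(x)), c_0(z_2(x))\})$, which selects an arrow $x \to z_i(x)$. The clique structure in the first copy then forces, exactly as in Example \ref{ex:l}, that no two arrows point at a common $z$. Finally, since the set of arrow-sending vertices has $\mu$-measure $> 1 - 1/512$ while the set of vertices with p-degree $\geq 1$ has measure $15/16$, and arrows act by measure-preserving group elements, we obtain the same cylinder-injection contradiction as in the proof of Proposition \ref{props:t}.

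\textbf{Existence.} Start from the AC-derived colouring of Example \ref{ex:i} in which no vertex is crowded almost everywhere, and let $z_i(x)$ denote the arrow target from $x$. Colour the second copy by $c_0$ and the first copy by $c(x) = c_0(z_i(x))$. Propriety within the second copy is immediate from its edge condition; propriety across a cross-edge $x \sim \rho(z)$ is built in, since $c_0(z) \neq c_0(z_i(x)) = c(x)$; propriety within a first-copy clique centered at some $z$ follows because at most one clique member points at $z$, and the alternative targets of any two other clique members differ by an element of the length-$4$ set $\{g_5, \dots, g_{16}\}$ and hence receive different $c_0$-colours. This is the same small case check underlying Proposition \ref{props:l}.

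\textbf{Main obstacle.} The technical core is the permutation-rigidity of the second copy. Making precise the claim that the even-length ball of radius $N+5$ hits every original colour class for almost every $x$ requires converting the odd-length density built into the definition of $N$ into an even-length density, using ergodicity of $H$ and the $+10$ slack to bridge the parity switch, and then showing that the resulting permutations on overlapping balls are forced to agree so as to define a single $\pi$ per $H$-orbit. Once this rigidity is in hand, everything else is an arrow-orientation accounting identical in spirit to Propositions \ref{props:t} and \ref{props:l}.
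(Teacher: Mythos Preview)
Your overall strategy matches the paper's: use the cross-edges and the second-copy structure to force, for each $x\in X\setminus Q$, the colour $c(x)$ into a two-element set indexed by the potential arrow targets $z_1(x),z_2(x)$; then use the first-copy cliques to rule out two arrows into the same $z$; finally run the same mass-conservation contradiction as in Propositions~\ref{props:t} and~\ref{props:l}. The existence half is essentially identical to the paper's (second copy coloured by $c_0$, first copy by the Example~\ref{ex:l} solution).

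Where you diverge is in the mechanism you use to pin down the two allowed colours at $x$. You argue for a global per-$H$-orbit permutation $\pi$ with $c\circ\rho=\pi\circ c_0$, obtained by rigidifying on even balls and propagating through overlaps; you then correctly flag the parity conversion and the propagation across the $\epsilon$-bad set as the ``main obstacle''. The paper avoids this entirely by a purely local argument, and you can too. For $x\in X\setminus Q$, choose for each of the $15$ original colours $k\notin\{c_0(z_1),c_0(z_2)\}$ a witness $w_k$ at odd distance $\le N$ from $x$ with $c_0(w_k)=k$ (this is exactly what $x\notin Q$ provides). All of $z_1,z_2,w_3,\dots,w_{17}$ lie at pairwise even distance $\le 2N+2\le 2N+10$ and have pairwise distinct $c_0$-colours, so their $\rho$-images are pairwise adjacent in the second copy and hence receive $17$ distinct $c$-colours, i.e.\ all of them. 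The cross-edges force $c(x)\neq c(\rho(w_k))$ for each $k$, leaving precisely $c(x)\in\{c(\rho(z_1)),c(\rho(z_2))\}$. No orbit-wide $\pi$ and no odd-to-even density conversion is needed; your main obstacle is self-imposed.

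Your counting step is actually tighter than the paper's. You observe that two arrows from $X\setminus Q$ into the same $z$ would give two clique-adjacent points the common colour $c(\rho(z))$, so the arrow map is injective on $X\setminus Q$ and lands in the p-degree $\ge 1$ set of measure $\tfrac{15}{16}$. The paper instead allows for clique members in $Q$ and bounds the average in-degree by $1+\tfrac{4}{128}-\tfrac{1}{16}=\tfrac{31}{32}$; both routes give the contradiction with the outgoing mass $\ge 1-\tfrac{1}{512}$.
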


\begin{proof} 
  We assume that we  have a proper colouring of
  the two copies of $X$, using the same colours of the orignal colouring. If $z$ defines the centre of  a clique in the first
   copy,  these colours in the clique are distinct.  Furthermore, 
   with probability at least $1-\frac 1 {512}$ (membership
    in $X\backslash Q$) for an $x\in X$ the proper
   colouring at $x$ defines an arrow from $x$  to one of   
   two potential points  $z_1$ or $z_2$  
   according to the way the two points 
   $\rho (z_1)$ and $\rho (z_2)$ are now coloured. 
   Assuming a finitely additive $G$-invariant measure,
    the arrows defined   outward
    have a global
    weight  totaling  at least $1-\frac 1 {512}$. 
    On the other hand,
     given a $z$ defining a clique in the first copy of $X$, 
     the probability that at least one   point  in the clique
      centered at $z$ is  in $Q$ is no more than  $\frac 1 {128}$.   
      That means
      the weight of arrows toward such points
      $z$, on the average throughout $X$,
     can be no more than
    $1 + \frac 4 { 128} - \frac 1 {16}= \frac {31} {32}$.
   \vskip.2cm

   \noi   To show that there is some colouring according to the rule, we start by colouring the second copy of $X$  according to the original  colouring of the first copy  into Borel sets
    (via the map $\rho$).
    Because the set $Q$ has no adjacencies,
       any  previous solution from Example
       \ref{ex:l}  on the first copy of $X$ will    define a  proper colouring. 
 Hence the colouring is paradoxical.
  \end{proof}

 \section{Further study}

 {\bf Question 3:} Given  any probability space $X$ and a 
 finitely generated measure preserving group $G$ acting on $X$,
 let $P$ be the stationary  colouring rule
 that   requires only
 that the colouring must be proper, where adjacency is defined through
  the Cayley graph using the finitely many generators. Is there such an $X$ and $G$ such that the stationary  rule $P$ with finitely many   colours is paradoxical? \\

  Our first instinct is to believe that such a colouring rule $P$ cannot be paradoxical, 
  because the rule $P$ is not
   complex enough.
    Through increasing the number of colours, so one could think, 
   one jumps from impossible in any way to possible with respect to some finitely additive and invariant way.
   The situation is likely  very different to that
      with Borel colouring [7],
   where the Borel property separates   what can be accomplished
      with different numbers of colours.\\
  
    A  colouring rule with $k$ colours
     is (stationarily) \emph{essential} if
    it is paradoxical of rank  two and there exists no (stationary)
    paradoxical rank one colouring
    rule with the same  colours whose satisfaction
     implies the satisfaction of the original rule.
     A space $X$ is (stationarily) essential
      with $k$ colours
      if there is no (stationary) paradoxical rank one colouring  rule
       with $k$ colours defined
      on $X$, and
      yet there is  a paradoxical colouring  rule with $k$ colours
       defined on $X$  of  rank two. 
 Intriguing is the difference between what can be accomplished by
 stationary and non-stationary colouring rules. 
 Example \ref{ex:1}  shows that the Hausdorff rule is not stationarily essential. 
\\ 
  
\noi  {\bf Question 4:} Is 
 there a colouring rule that is stationarily essential?  
 \\
 
   We conjecture
 that the answer to   Question 4 is yes, though before
 communication with M. Bounds we thought that the
 Hausdorff rule was stationarily essential.\\ 

\noi {\bf Question 5:}    Does there exist a space and a semigroup action
    that is essential with respect to any number
    of colours? \\
    
When  none of the
    descendants are invertible Example \ref{ex:3}  doesn't apply, and  it is plausible that some paradoxical
    colouring rules are  not rank one  in character no matter how
     many colours are allowed.

    There is something satisfying about Example \ref{ex:i}  and unsatisfying
    about Examples \ref {ex:1}, \ref {ex:2},  and \ref{ex:3} .  Example  \ref{ex:i}  employs a
    stochastic process that seems to push colourings toward the satisfaction of
     the rule.
     On the other hand, the existence of  colourings witnessing the rules
     of  the first three examples 
      seem to be either accidental or  contrived.
     One could perceive
  satisfaction of a colouring rule to be a kind of fixed colouring, with
  the colouring rule defining some kind of iterative process that does or
   doesn't bring the colourings closer to satisfaction of the rule. 
   Of course if a colouring rule forced almost everwhere
   (with respect to the original measure $m$) an eventually stable 
    colour in finitely many colouring stages  with respect
    to  some initial measurable  colouring
    then there would be measurable
     colouring solutions with respect to the completion
     and the rule could not be paradoxical (given that the descendants remain measure
      preserving with respect to the completion). Although Example \ref{ex:i}  does
     seem to contain a force moving toward its satisfaction,
    iterations of the rule would likely involve long periods of stability followed by solutions 
    from one area of the space colliding occasionally with solutions from another. 
    A desired property may concern the relative stability of the colourings in the limit, and inspires the
       following question. \\

{\bf Question 6:}     Is there 
a  refinement  to  the  definition of a  rank one paradoxical colouring
 rule  
  that
    identifies a credible   force toward its satisfaction?
\\

Suppose one had a colouring rule for $S ^G$ where $S$ is a finite set,
$G$ is a group, and $C$ are the finite set of colours. 
  Another  structure to consider is $\{ S \times C\} ^G$,
  where we assume a random colouring start to $S^G$ inherited from
  the $C$ coordinate. We could consider how the colouring
   rule generates iterations of colourings on $\{ S \times C\} ^G$.

Like in Section 2 we will denote by
$\cal F$ the sigma algebra on which a
probability measure $m$ is defined, in most cases
${\cal F}$ will denote 
 Borel sets.
The \emph {deficiency} of a  colouring
        rule on $X$ is the infimum of all
        probabilities $\rho$ such that there is a
         ${\cal F}$ measurable  subset  
      of size  $1-\rho$ where 
      the  rule {\bf can}  be satisfied in some finitely
       additive and invariant way.
       \\
        
    \noi  {\bf Question 7:} Do all paradoxical colouring rules have positive deficiency?\\

        Assume  a sequence of colouring functions $c_1, c_2, \dots$,
        corresponding to finitely additive measures $\mu_1, \mu_2, \dots$
        extending the original measure (with the descendants measure preserving)
         and subsets $X_1, X_2, \dots$ such that for each $i=1,2,\dots$
        the measure of $X_i$ is greater than $1-\frac 1i$, 
        the $c_i$ are $\mu_i$ measurable, and 
        the colouring rule  is  satisfied by $c_i$ on $X_i$. Will there
        exist a colouring $c$ and a corresponding  finitely additive $\mu$ (with the
        descendants measure preserving)
        that satisfies the rule almost everywhere and is $\mu$ measurable?
        Initially it seems that the answer should be no, because there is no guarantee
        that any colouring in a pointwise limit should have any measurable properties. However
        we suspect  that the colouring rule will allow for finitely additive measurability in the limit, 
        meaning that the answer to Question 7 is yes and this
          is another difference between paradoxical
          colouring rules  and the topic of
          Borel colouring.
 
 The problem of extending the Cancellation Law inspires the
 following question.  \\

         \noi   {\bf Question 8:} Can one extend the conclusion of Theorem 1 to  $2[X] = [X]$?
\\
          
          In the definition of a colouring rule, we use that the
          descendants are measure preserving. This was a natural way
          to connect colouring rules to measure theoretic paradoxes.
          We could be more general in the application of
           an admissible relation. \\

           \noi 
{\bf Question 9:} 
Is there a reasonable and more general   definition for paradoxical
colouring rules  
           that  does not require that the descendants be   measure
           preserving?
           \\


\begin{thebibliography}{HD}



\bibitem{B} A. Bernshteyn
 \emph{Descriptive Combinatorics and Distributed Algorithms},
Notices AMS 69 (2022), 1496-1507.



\bibitem {B} M. Bounds, M.Sc. dissertation in Applicable Mathematics, 
 \emph{London School of Economics},
 (2020).  






 \bibitem {BIS} R. Boutonnet, A. Ioana and A. Salehi Golsefidy,
 \emph{Local spectral gap in simple Lie groups and applications}, 
 Invent. Math. {\bf 208}:3 (2017), 715-802.



\bibitem{CCMTD} C. Conley, A. S. Marks and R. D. Tucker-Drob
 \emph{Brooks's theorem for measurable colorings},
Forum Math. Sigma 4 (2016),

\bibitem{KST} Y. Glasner, N. Monod.
\emph{Amenable actions, free products and a fixed point property},
 Bull. London Math. Soc. 39 (1), 138-150 

\bibitem{GMP} \ L. Grabowski, A. M\'ath\'e and O. Pikhurko,
 \emph{Measurable equidecompositions for group actions with an expansion property} (preprint),
 arXiv:1601.02958

\bibitem{Hau14b} F. Hausdorff,
\emph{Bemerkung \"{u}ber den Inhalt von Punktmengen},
  Math. Ann. 75 (1914), 428-433.

\bibitem{KST} A. S. Kechris, S. Solecki, S. Todorcevic.
\emph{Borel Chromatic Numbers},
 Adv.in Math. 141-1 (1999), 1-44. 

 \bibitem {Mar82} G. A. Margulis,
 \emph{Finitely additive invariant measures on Euclidean spaces},
  Ergod. Th. and Dynam. Sys. {\bf 2} (1982), 383--396.  

\bibitem{ST} R. S. Simon and G. Tomkowicz,
      \emph {A Bayesian Game without $\epsilon$-equilibria},
       Israel J. of Math. 227 (2018), 215-231.   
   

\bibitem{ST1} R. S. Simon and G. Tomkowicz,
      \emph {Optimisation and paradoxical decompositions},
      	arXiv:2106.02084 (2021).

\bibitem{ST1} R. S. Simon and G. Tomkowicz,
      \emph {A Measure Theoretic Paradox from a continuous colouring rule},
       arXiv:2203.11158 (2022).


\bibitem {Tar38b} A. Tarski, 
 \emph{Algebraische Fassung des Massproblems}, 
  Fund. Math. {\bf 31} (1938), 47--66.

\bibitem{W} G. Tomkowicz and S. Wagon,
     \emph{The Banach-Tarski Paradox, Second Edition},
      Cambridge University Press, 2016. 

    
 \bibitem{vD} E. K. van Douwen.
\emph{Measures invariant under actions of $F_2$},
 Topol. Appl. 34 (1990), 53-68.

\end{thebibliography}
\end{document}